\pgfplotsset{compat=newest}
\pgfplotsset{plot coordinates/math parser=false}
\newcommand{\R}{\mathbb{R}}
\renewcommand{\epsilon}{\varepsilon}
\newcommand{\N}{\mathbb N}
\newcommand{\Z}{\mathbb Z}
\newcommand{\sL}{{\mathsf{L}\!}}
\newcommand{\id}{\ensuremath{\mathrm{Id}}}
\newcommand{\loc}{\textnormal{loc}}
\newcommand{\dd}{\ensuremath{\,\mathrm{d}}}
\DeclareMathOperator{\supp}{supp}
\DeclareMathOperator*{\esssup}{ess-\sup}
\DeclareMathOperator*{\essinf}{ess-\inf}
\newcommand{\ndt}{{\eta}}
\newcommand{\wt}{\gamma}
\newcommand{\Ne}{{N_\ndt}}
\newcommand{\norm}[1]{{\left\|#1\right\|}}
\newcommand{\Dx}{\Delta x}
\newcommand{\Dt}{\Delta t}
\newcommand{\jmh}{{j-\frac{1}{2}}}
\newcommand{\jph}{{j+\frac{1}{2}}}
\newcommand{\jmo}{{j-1}}
\theoremstyle{plain}						
\newtheorem{theorem}{Theorem}[section]
\theoremstyle{definition}
\newtheorem{assumption}[theorem]{Assumption}
\theoremstyle{remark}
\newtheorem{remark}[theorem]{Remark}
\newlength\fwidth
\title{Conservation laws with nonlocality in density and velocity and their applicability in traffic flow modelling}
\author{Jan Friedrich\footnotemark[1],\ Simone G\"ottlich\footnotemark[2],\ Alexander Keimer\footnotemark[3]\ \ and Lukas Pflug\footnotemark[4]}
\begin{document}

\footnotetext[1]{RWTH Aachen University, Institute of Applied Mathematics, 52064 Aachen, Germany ({friedrich@igpm.rwth-aachen.de})}
\footnotetext[2]{University of Mannheim, Department of Mathematics, 68131 Mannheim, Germany ({goettlich@uni-mannheim.de})}
\footnotetext[3]{Friedrich-Alexander Universität Erlangen-Nürnberg, Department Mathematik, Cauerstr. 11, 91058 Erlangen, Germany ({alexander.keimer@fau.de})}
\footnotetext[4]{Friedrich-Alexander Universität Erlangen-Nürnberg, Competence Unit for Scientific Computing, Martensstr. 5a, 91058 Erlangen, Germany ({lukas.pflug@fau.de})}
%
%
\maketitle


\begin{abstract}
In this work we present a nonlocal conservation law with a velocity depending on an integral term over a part of the space. 
The model class covers already existing models in literature, but it is also able to describe new dynamics mainly arising in the context of traffic flow modelling. We prove the existence and uniqueness of weak solutions of the nonlocal conservation law. Further, we provide a suitable numerical discretization and present numerical examples. 
\end{abstract}
 

\section{Modelling equations}
In recent years, the mathematical analysis on nonlocal conservation laws \cite{aggarwal,teixeira,chiarello,pflug} has drawn increased attention.
These nonlocal models are capable to describe a variety of applications such as traffic flow modelling 
\cite{bayen2022modeling,blandin2016well,chiarello2020micro,chiarello2019non-local,friedrich2018godunov,keimer1,kloeden,lee2019thresholds}, 
supply chains \cite{wang,keimer2,keimer3}, 
sedimentation processes \cite{betancourt,burger2022hilliges}, pedestrian dynamics \cite{colombo2012}, particle growth \cite{pflug2020emom,rossi2020well}, crowd dynamics and population modelling 
\cite{colombo_nonlocal,lorenz2019nonlocal} 
as well as opinion formation \cite{spinola,piccoli2018sparse}.

By a nonlocal conservation law we mean a conservation law, in which the velocity depends on a space dependent integral term of some quantity of interest.
In the literature, there are two main approaches: 
on the one hand the averaging is done over the density which is then used to determine the velocity, e.g. \cite{betancourt,blandin2016well,chiarello}; 
on the other hand, the averaging is done directly over the velocity \cite{friedrich2022conservation,friedrich2018godunov,burger2022hilliges}.
Both approaches can be found in applications for traffic flow and sedimentation.
Here, we want to consider a unifying approach, incorporating both averaging into one equation. Consider that \(V_{1},V_{2}:\R\rightarrow\R\) are given velocities, we study the nonlocal conservation law
\begin{equation}
\begin{aligned}
\partial_{t}q+ \partial_{x}\big(V_1\big(\gamma\ast V_2(q)\big)q\big)&=0 &&(t,x)\in(0,T)\times\R\\
q(0,x)&=q_{0}(x)&& x\in\R
\end{aligned}
\label{eq:NLmixed}
\end{equation}
with 
\begin{align*}
    \gamma \ast V_2(q)(t,x)\coloneqq\int_x^\infty \wt(y-x)V_2(q(t,y))\dd y.
\end{align*}
with \(q_{0}\) denoting the initial density and \(\gamma\) a weight function appearing in the nonlocal term.
As previously stated, the equation entails  also the pure nonlocality in the velocity (\(V_{1}\equiv \id\)) and the pure nonlocality in the density (\(V_{2}\equiv\id\)).

Let us explain the idea behind the proposed model \eqref{eq:NLmixed} for traffic flow in greater detail: Drivers adapt their velocity according to a mean of some quantity of interest.
This quantity is computed out of the density by the function $V_2$.
Then, the function $V_1$ transforms the quantity of interest into a velocity.
In particular, for $V_2\equiv \id$ the quantity of interest is the density and $V_1$ a suitable velocity function such that we obtain the averaging over the density, see \cite{blandin2016well}.
If $V_2$ transforms the density directly into a velocity, we choose $V_1\equiv \id$ and obtain the averaging over the velocity as proposed in \cite{friedrich2018godunov}.
But besides these two choices the model \eqref{eq:NLmixed} is capable to describe much more effects.
We will present examples in \cref{sec:numericexample}.

\section{Existence and uniqueness of solutions}
In this section, we shortly discuss the well-posedness, i.e.\ the existence and uniqueness of solutions. We will also dwell on the maximum principle, a quite important property as it tells that the time dependent density is always bounded between a minimal and maximal density which stems from the initial density on the road. For traffic applications this property is crucial to have.

We start with an existence and uniqueness result on small time horizon and require for now the following assumptions:
\begin{assumption}[Minimal assumptions on the involved input data]\label{ass:1} We assume that
\[\bullet\ q_{0}\in L^{\infty}(\R;\R_{\geq 0})\cap BV(\R),\ 
\bullet\ V_{1},V_{2}\in W^{1,\infty}_{\text{loc}}(\R),\
\bullet\ \gamma\in BV(\R_{> 0};\R_{\geq 0})
\]
with \(TV(\R)\coloneqq \big\{f\in L^{1}_{\loc}(\R):\ |f|_{TV(\R)}<\infty\big\}\) and
\[
\|f\|_{BV(\R)}\coloneqq \|f\|_{L^{1}(\R)}+ \supp_{\phi\in C^{\infty}_{\text{c}}(\R):\ \|\phi\|_{L^{\infty}(\R)}\leq 1}\int_{\R}\phi'(x)f(x)\dd x,\ f\in L^{1}_{\loc}(\R).
\]
\end{assumption}
As we will talk about weak solutions, we just mention that we mean with weak solutions the canonical definition (see for instance \cite[Definiton 2.13]{pflug}).

\begin{theorem}[Existence and uniqueness on a small time horizon]
Let assumption \ref{ass:1} hold. Then, there exists \(T\in\R_{>0}\) so that \eqref{eq:NLmixed} admits a unique weak solution \[
q\in C\big([0,T];\sL^{1}(\R)\big)\cap L^{\infty}\big((0,T);BV(\R)\big).\]
\end{theorem}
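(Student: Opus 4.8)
The plan is to recast \eqref{eq:NLmixed} as a fixed-point problem and to invoke Banach's fixed-point theorem on a short time interval. Given a candidate density $w$, I would freeze the nonlocality by setting $\cW_w(t,x) \: \gamma\ast V_2(w)(t,x)$ and consider the \emph{linear} continuity equation $\partial_t q+\partial_x\big(V_1(\cW_w)\,q\big)=0$, $q(0,\cdot)=q_0$, whose solution I call $\Phi(w)$; a fixed point of $\Phi$ is precisely a weak solution of \eqref{eq:NLmixed}. I would work in the complete metric space
\[
\cX \: \Big\{w\in C\big([0,T];\LoneR\big):\ 0\le w\le M\ \text{and}\ \sup_{t\in[0,T]}\abs{w(t)}_{BV(\R)}\le B\Big\},
\]
equipped with the $C([0,T];\LoneR)$-distance, for constants $M\ge\norm{q_0}_{\LinfR}$, $B\ge\abs{q_0}_{BV(\R)}$ and a horizon $T$ to be fixed at the end.

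I would first establish the regularity of the frozen velocity, which is the crucial smoothing effect of the nonlocal term. Since $\gamma\in BV(\R_{>0})$ it is bounded, and the translation estimate for $BV$ functions gives, for $w\in\cX$,
\[
\mathrm{Lip}_x\big(\cW_w(t,\cdot)\big)\le \norm{\gamma}_{\LinfR}\,\norm{V_2'}_{\LinfR}\,\abs{w(t)}_{BV(\R)}\le \norm{\gamma}_{\LinfR}\norm{V_2'}_{\LinfR}B,
\]
so that $x\mapsto V_1(\cW_w(t,x))$ is Lipschitz, uniformly in $t$, with constant controlled by $B$ and the local Lipschitz constants of $V_1,V_2$. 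This makes the characteristic ODE $\dot X(t)=V_1(\cW_w)(t,X(t))$ uniquely solvable (Picard--Lindel\"of/Carath\'eodory), with a bi-Lipschitz, orientation-preserving flow. Defining $\Phi(w)$ as the push-forward of $q_0$ along this flow yields nonnegativity and mass conservation $\norm{\Phi(w)(t)}_{\LoneR}=\norm{q_0}_{\LoneR}$ at once; differentiating the representation along characteristics gives Gr\"onwall bounds $\norm{\Phi(w)(t)}_{\LinfR}\le \norm{q_0}_{\LinfR}e^{Ct}$ and $\abs{\Phi(w)(t)}_{BV(\R)}\le \abs{q_0}_{BV(\R)}e^{Ct}$, with $C=C(M,B,V_1,V_2,\gamma)$. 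Choosing $M,B$ strictly larger than the initial values and $T$ small enough that $e^{CT}$ does not exceed the allotted margin makes $\Phi$ a self-map of $\cX$.

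To obtain the contraction, I would take $w_1,w_2\in\cX$ and bound the velocities in $\sL^\infty$ by the $\sL^1$-distance of the inputs,
\[
\norm{V_1(\cW_{w_1})(t,\cdot)-V_1(\cW_{w_2})(t,\cdot)}_{\LinfR}\le \norm{V_1'}_{\LinfR}\norm{\gamma}_{\LinfR}\norm{V_2'}_{\LinfR}\,\norm{w_1(t)-w_2(t)}_{\LoneR}.
\]
Inserting this into an $\sL^1$-stability estimate for the continuity equation — the difference $r\: \Phi(w_1)-\Phi(w_2)$ solves $\partial_t r+\partial_x\big(V_1(\cW_{w_1})r\big)=-\partial_x\big((V_1(\cW_{w_1})-V_1(\cW_{w_2}))\Phi(w_2)\big)$ with $r(0)=0$ — and using the uniform $\sL^\infty$ and $BV$ bounds on $\Phi(w_2)$ should produce
\[
\sup_{t\in[0,T]}\norm{\Phi(w_1)(t)-\Phi(w_2)(t)}_{\LoneR}\le C\,T\,\sup_{t\in[0,T]}\norm{w_1(t)-w_2(t)}_{\LoneR}.
\]
Shrinking $T$ so that $CT<1$ makes $\Phi$ a contraction, and Banach's theorem then delivers a unique fixed point in $\cX$, i.e.\ the unique weak solution, with the asserted regularity $q\in C([0,T];\LoneR)\cap L^\infty((0,T);BV(\R))$ built into $\cX$; uniqueness in the whole solution class follows from the same stability estimate via Gr\"onwall.

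The main obstacle will be the $\sL^1$-stability estimate for the continuity equation used in the last step. Because $V_1,V_2$ are only $W^{1,\infty}_{\loc}$, the dependence of the velocity on $w$ can be controlled in $\sL^\infty$ but \emph{not} differentiated, so I cannot bound $\partial_x\big(V_1(\cW_{w_1})-V_1(\cW_{w_2})\big)$; the estimate must instead be closed using only $\norm{V_1(\cW_{w_1})-V_1(\cW_{w_2})}_{\LinfR}$ together with the uniform $BV$ bound on the solutions. Concretely this amounts to comparing the two monotone flow maps, whose sup-distance is controlled by the velocity difference, and upgrading the resulting transport/Wasserstein-type bound to an $\sL^1$ bound by exploiting that $q_0$, and hence each $\Phi(w)(t)$, is $BV$ and that mass is conserved. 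Getting this right — showing that the nonlocal smoothing (Lipschitz-in-$x$ velocity with constant $\sim B$) and the propagated $BV$ bound together suffice to run both the self-map and the contraction on a common interval $[0,T]$ — is the delicate point; the remaining ingredients (existence of characteristics, mass conservation, the maximum-type and $BV$ bounds, and the final Gr\"onwall argument) are routine.
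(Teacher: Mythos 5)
Your proposal follows essentially the same route as the paper: freeze the nonlocal term, solve the resulting linear continuity equation by characteristics (push-forward of $q_0$ along the flow), and run Banach's fixed-point theorem on a small time horizon in the topology $C([0,T];\sL^{1}(\R))$ with uniform $L^\infty$/$BV$ bounds — this is precisely the paper's fixed-point identity \eqref{eq:42}, posed in the solution rather than in the nonlocal term. The paper's own proof is itself only a sketch that defers the contraction estimate to the cited literature, so your more detailed outline, including the honest identification of the $\sL^1$-stability estimate for the frozen-velocity equation as the delicate step, is fully consistent with it.
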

\begin{proof}
    The proof consists of applying a fixed-point approach on the nonlocal term and the solution as follows.
    Assume that we know the solution to the balance law \(q\), we can plug it into the nonlocal operator and have
    \begin{align*}
        \gamma\ast V_{2}(q)(t,x)=\int_{x}^{\infty}\gamma(y-x)V_{2}(q(t,y))\dd y
    \end{align*}
    from which we can compute the ``velocity'' of the conservation law as
    \begin{equation}
(t,x)\mapsto V_{1}\big(\gamma\ast V_{2}(q)(t,x)\big),\ (t,x)\in (0,T)\times\R.\label{eq:velocity_nonlocal}
    \end{equation}
    Assuming that \(q\) has the postulated regularity \(C\big([0,T];\sL^{1}(\R)\big)\cap L^{\infty}\big((0,T);BV(\R)\big)\), we can observe that the velocity is Lipschitz-continuous. We can thus invoke characteristics and state the solution once more as
    \begin{equation}
q(t,x)=q_{0}(\xi_{q}(t,x;0))\partial_{2}\xi_{q}(t,x;0),\label{eq:42}
    \end{equation}
    where \(\xi_{q}\) is the solution to the characteristics, i.e.\ 
    \[
\xi(t,x;\tau)=x+\int_{t}^{\tau} V_{1}\big(\gamma\ast V_{2}(q)(s,\xi(t,x;s))\big)\dd s,\quad (t,x,\tau)\in (0,T)\times\R\times(0,T).
    \]
As \(\xi_{q}\) depends on the solution \(q\), the identity in \cref{eq:42} is actually a fixed-point problem in the solution \(q\). By means of Banach's fixed-point theory we can then prove that a unique solution to \cref{eq:42} exists. The uniqueness carries over from the fixed-point argument. As can be seen, the choice of topology is crucial to obtain a unique solution and the small time horizon guarantees that the fixed-point mapping is a contraction (involving the properties of the characteristics). We do not detail this further, but just mention that these approaches have been used in several publications \cite{bayen2022modeling}, often not in the solution but in the nonlocal term \cite{pflug}.
\end{proof}
Having established the well-posedness of solutions, we can restrict the velocities \(V_{1},V_{2}\) in a reasonable way -- from an application point of view -- so that a maximum principle holds which itself implies the existence and uniqueness of solutions on any arbitrary time horizon.
\begin{theorem}[Maximum principle/existence of solutions on finite time horizon]
Let assumption \ref{ass:1} hold and assume in addition that
\begin{equation}
\begin{gathered}
\big(V_{1}'\leqq0\wedge\ V_{2}'\geqq0\big) \vee \big(V_{1}'\geqq0\wedge\ V_{2}'\leqq0\big)\ \text{ on }\ \Big[\essinf_{x\in\R}q_{0}(x),\esssup_{x\in\R}q_{0}(x)\Big]\\
\gamma \ \text{ monotonically decreasing}.
\end{gathered}
\label{eq:sign_V_1_V_2}
\end{equation}
Then, for every \(T\in\R_{>0}\) there is a unique weak solution \(q\in C\big([0,T];L^{1}(\R)\big)\cap L^{\infty}((0,T);BV(\R))\) of the nonlocal conservation law in \cref{eq:NLmixed}, and the following maximum principle holds:
\begin{align}
\essinf_{x\in\R}q_{0}(x)\leq q(t,x)\leq \esssup_{x\in\R}q_{0}(x)\qquad (t,x)\in (0,T)\times\R \text{ a.e.}.\label{eq:maximum_principle}
\end{align}
\end{theorem}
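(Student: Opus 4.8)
The plan is to treat the maximum principle as the central estimate and to obtain global existence from it by continuation, since the first theorem already furnishes a unique local solution whose lifespan is controlled by the $L^\infty$- and $BV$-norms of the data. First I would rewrite the representation \eqref{eq:42} along the flow: with $v(t,x):=V_{1}\big(\gamma\ast V_{2}(q)(t,x)\big)$ and $X(t;y)$ the forward characteristic emanating from $y$ at time $0$, the Jacobian identity $\partial_{2}\xi_{q}=1/\partial_{y}X$ turns \eqref{eq:42} into
\[
q(t,X(t;y))=q_{0}(y)\exp\Big(-\int_{0}^{t}\partial_{x}v(s,X(s;y))\dd s\Big),
\]
so the entire question reduces to controlling the sign of $\partial_{x}v$ along characteristics, where it matters most near the extrema of $q$.

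The key computation is a clean formula for the spatial derivative of the nonlocal term. Writing $W:=\gamma\ast V_{2}(q)$ and differentiating, the boundary term at $y=x$ produces $-\gamma(0^{+})V_{2}(q(t,x))$, while integration by parts against the decreasing function $\gamma$ introduces $\mu:=-\gamma'$, a nonnegative measure on $(0,\infty)$ of total mass $\gamma(0^{+})$. Since $\gamma(0^{+})=\mu((0,\infty))$, the two pieces combine into
\[
\partial_{x}W(t,x)=\int_{(0,\infty)}\big[V_{2}(q(t,x+z))-V_{2}(q(t,x))\big]\dd\mu(z).
\]
Now the sign conditions \eqref{eq:sign_V_1_V_2} become decisive, as $\partial_{x}v=V_{1}'(W)\,\partial_{x}W$. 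In the case $V_{1}'\le0,\ V_{2}'\ge0$, at a point where $q(t,\cdot)$ attains its maximum one has $V_{2}(q(t,x+z))\le V_{2}(q(t,x))$ for all $z>0$, hence $\partial_{x}W\le0$ and therefore $\partial_{x}v\ge0$; inserted into the characteristic ODE $\tfrac{\mathrm d}{\mathrm dt}q=-q\,\partial_{x}v$ together with $q\ge0$, this forces $q$ not to increase at its maximum. The mirror argument at the minimum, and the sign-reversed computation in the case $V_{1}'\ge0,\ V_{2}'\le0$, together yield the two-sided bound \eqref{eq:maximum_principle}.

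To convert this extremum heuristic into a proof I would apply the estimate to $\phi(t):=\esssup_{x}q(t,x)$ and $\psi(t):=\essinf_{x}q(t,x)$, showing via the characteristic flow that $\phi,\psi$ are locally Lipschitz in $t$ and that the computation above makes $\phi$ non-increasing and $\psi$ non-decreasing (by bounding their one-sided Dini derivatives), whence $\phi(t)\le\phi(0)$ and $\psi(t)\ge\psi(0)$. The hard part is precisely this step. Since $q(t,\cdot)$ is only $BV$, there is no classical interior maximum and $\mu$ may carry atoms, so the ``at the maximum'' argument must be run with a maximizing sequence and a limit passage; moreover the sign hypotheses hold only on $[\psi(0),\phi(0)]$, so the estimate is genuinely a bootstrap (it applies exactly as long as $q$ has not yet left the interval) and must be closed by a first-exit argument. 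A technically cleaner route, which I would fall back on if the direct estimate becomes cumbersome, is to first establish the bound for smooth, compactly mollified data $q_{0},\gamma,V_{1},V_{2}$, where every quantity above is classical and the sign argument is immediate, and then pass to the limit using the continuous dependence built into the fixed-point construction of the first theorem.

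Finally, the maximum principle delivers the horizon-independent bound $\|q(t,\cdot)\|_{L^\infty}\le K:=\max\{|\psi(0)|,|\phi(0)|\}$ on the whole lifespan. Combined with a Gronwall estimate of the form $|q(t,\cdot)|_{TV}\le|q_{0}|_{TV}\,\e^{Ct}$, whose constant $C$ depends only on $K$, on $\|\gamma\|_{BV}$ and on the local Lipschitz constants of $V_{1},V_{2}$ (all now fixed), this shows that neither norm can blow up in finite time. Because the existence time of the first theorem is bounded below in terms of these two norms, a standard continuation argument extends the unique local solution to an arbitrary horizon $T\in\R_{>0}$, with uniqueness on $[0,T]$ inherited from the local uniqueness by concatenation.
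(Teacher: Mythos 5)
Your proposal is correct and follows essentially the same route as the paper: a sign analysis of $\partial_t q$ at spatial extrema (using the monotonicity of $\gamma$ together with the opposed signs of $V_1'$ and $V_2'$), carried out on smooth/approximate solutions, followed by a continuation argument once the horizon-independent $L^\infty$ bound makes the nonlocal velocity uniformly Lipschitz in time. Your integration-by-parts identity $\partial_x W(t,x)=\int_{(0,\infty)}\big[V_2(q(t,x+z))-V_2(q(t,x))\big]\,\dd\mu(z)$ with $\mu=-\gamma'$ is a worthwhile refinement, since it supplies the justification for the sign of $\partial_{2}\big(\gamma\ast V_{2}(q)\big)$ at the extremum that the paper asserts without computation, and your explicit attention to the bootstrap/first-exit issue (the sign hypotheses holding only on the initial range) addresses a point the paper's sketch glosses over.
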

\begin{proof}
We only sketch the proof and assume that the solution to the conservation law with nonlocality as in \cref{eq:NLmixed} is smooth and compactly supported (this can be obtained by a classical approximation argument, we do not detail here). Then, we can assume that we are at a spatial location \(\tilde{x}\in\R\), where the maximum is attained.
Recalling the differential equation which is due to the higher regularity now satisfied in its strong version, we obtain for the time derivative at \(\tilde{x}\) and any given time \(t\in[0,T]\)
\begin{align}
\partial_{t}q(t,\tilde{x})&=-V_{1}'(\gamma\ast V_{2}(q(t,\cdot))(\tilde{x}))\partial_{2}\gamma\ast V_{2}(q(t,\cdot))(\tilde{x}) q(t,\tilde{x})\label{eq:partial_t_q_1}\\
&\quad -\big(V_1\big(\gamma\ast V_{2}(q(t,\cdot))(\tilde{x})\big)\partial_{2}q(t,\tilde{x})\big).\label{eq:partial_t_q_2}
\intertext{Recalling that \(\tilde{x}\) was one of the maximal points, we know that \(\partial_{2}q(t,\tilde{x})=0\) and get}
&=-V_{1}'(\gamma\ast V_{2}(q(t,\cdot))(\tilde{x}))\partial_{2}\gamma\ast V_{2}(q(t,\cdot))(\tilde{x}) \underbrace{q(t,\tilde{x})}_{\geq 0}.\label{eq:124}
\end{align}
Let us distinguish two cases:
\begin{itemize}
\item Assume that \(V_{1}\) is monotonically increasing (i.e.,\ \(V_{1}'\geqq 0\)) and \(V_{2}\) monotonically decreasing (i.e., \ \(V_{2}'\leqq 0\)) as well as \(\gamma\) monotonically decreasing, we can continue the computations and have
\begin{align*}
    \cref{eq:124}&=\underbrace{-V_{1}'(\gamma\ast V_{2}(q(t,\cdot))(\tilde{x}))}_{\leq 0}\underbrace{\partial_{2}\gamma\ast V_{2}(q(t,\cdot))(\tilde{x})}_{\geq0} \underbrace{q(t,\tilde{x})}_{\geq 0}
\end{align*}
Thus, the time derivative at the maximal points is negative, resulting in the mentioned upper bounds on the solution.
\item Assume that \(V_{1}\) is monotonically decreasing (i.e.,\ \(V_{1}'\geqq 0\)) and \(V_{2}\) monotonically increasing (i.e., \ \(V_{2}'\leqq 0\)) we can argue similarly to obtain again the upper bound on the solution.
\end{itemize}
For the lower bound, one can consider the time derivative at a minimal point and obtain that the derivative is then nonnegative. Finally, having uniform bounds as in \cref{eq:maximum_principle}, the solution can be extended to any finite time horizon as the velocity in \cref{eq:velocity_nonlocal} will remain uniformly Lipschitz-continuous with a Lipschitz-constant independent of the time considered. Thus, the typical clustering in time argument can be applied leading to the existence and uniqueness of weak solutions on any finite time horizon.
\end{proof}
\begin{remark}[Reasonability for the sign restriction in \cref{eq:sign_V_1_V_2} and \(\gamma\)]
The sign restrictions on the velocities are quite reasonable as one of them should be a decreasing function with regard to the traffic density while, when both of them would be decreasing or increasing, the composed velocity as in \cref{eq:velocity_nonlocal} would not be decreasing with regard to the traffic density, something nonphysical from the point of view of traffic.
The assumption on the nonlocal kernel acting at the position \(x\in\R\) from \(x\) to possibly \(\infty\) and being according to \cref{eq:sign_V_1_V_2} monotonically increasing can be understood from a traffic's perspective as follows: The velocity of the current density is only adjusted based on what is ahead in traffic, and the further away the traffic is, the less impact (or even no impact in the case that \(\gamma\) is compactly supported) the traffic information will have. This is in line with previous assumptions on the kernel having been explored in \cite{aggarwal,teixeira,bayen2022modeling,blandin2016well,chiarello,friedrich2018godunov,pflug, keimer1}.
\end{remark}
\begin{remark}[Monotonicity preserving dynamics under more  restrictive velocity \(V_{1}\)] 
Another interesting fact worth mentioning is the monotonicity  preserving dynamics, provided that \cref{eq:sign_V_1_V_2} holds and in addition \(V_{1}''\leq 0\), thus necessitating \(V_{1}\in W^{2,\infty}_{\loc}(\R)\). Consider again smooth solutions and assume for now (only for reasons of simplicity) that we have a piece-wise constant kernel, i.e.\ 
        \[
        \gamma\ast V_{2}(q)(t,x)=\tfrac{1}{\eta}\int_{x}^{x+\eta}V_{2}(q(t,y))\dd y,\ (t,x)\in[0,T]\times\R
\]
(the estimate can of course be made for general monotonically decreasing kernels of sufficient regularity).
Then, recall \crefrange{eq:partial_t_q_1}{eq:partial_t_q_2} and assume
 that the initial datum is monotonically increasing and that we are at a time \(t\in [0,T]\) so that for the first time there exists \(\tilde{x}\in\R\) with \(\partial_{x}q(t,x)\big|_{x=\tilde{x}}=0\), i.e., a point where the monotonicity might break. Then, we manipulate terms as follows:
\begin{align*}
-\partial_{t,x}q(t,x)&=\tfrac{1}{\eta^{2}}V_{1}''(\ldots) \big(V_{2}(q(t,x+\eta))-q(t,x)\big)^{2}q(t,x)\\
&\quad +\tfrac{1}{\eta}V_{1}'(...) \Big(V_{2}'(q(t,x+\eta))q_{x}(x+\eta)-V_{2}'(q(t,x))q_{x}(x)\big)q(t,x)\\
&\quad + \tfrac{1}{\eta}2V_{1}'(...) \big(V_{2}(q(t,x+\eta))-V_{2}(q(t,x)))q_{x}(t,x)\\
&\quad+V_1\big(\ldots\big)q_{xx}(t,x)
\intertext{and evaluate now at \(x=\tilde{x}\) so that \(\partial_{x}q(t,x)\geq 0\ \forall x\in\R,\ \partial_{x}q(t,x)\big|_{x=\tilde{x}}=0,\ \partial_{x}^{2}q(t,x)\big|_{x=\tilde{x}}=0\) (thus assuming \(\partial_{x}q)\) is minimal at \(x=\tilde{x}\))}
\partial_{2}q_{t}(t,\tilde{x})&=-\tfrac{1}{\eta^{2}}V_{1}''(...) \big(V_{2}(q(t,\tilde{x}+\eta))-V_{2}(q(t,\tilde{x}))\big)^{2}q(t,\tilde{x})\\
&\quad -\tfrac{1}{\eta}V_{1}'(...) \big(V_{2}'(q(t,\tilde{x}+\eta))\partial_2 q(t,\tilde{x}+\eta)-V_{2}'(q(t,\tilde{x}))\partial_{2}q(t,\tilde{x})\big)q(t,\tilde{x})\\
&\quad- \tfrac{1}{\eta}2V_{1}'(...) (V_{2}(q(t,\tilde{x}+\eta))-V_{2}(q(t,\tilde{x})))\partial_{2}q(t,\tilde{x})\\
\intertext{using that \(V_1''\leq 0\) and that \(\partial_{x}q(t,x)\big|_{x=\tilde{x}}=0\) as well as the sign restrictions on \(V_{1}',V_{2}'\)}
& =\underbrace{-V_{1}'(...) V_{2}'(q(t,\tilde{x}+\eta))}_{\geq 0}\partial_2 q(t,\tilde{x}+\eta)\underbrace{q(t,\tilde{x})}_{\geq 0}\\
&\geq \underbrace{-V_{1}'(...) V_{2}'(q(t,\tilde{x}+\eta))}_{\geq 0}\partial_2 q(t,\tilde{x})\underbrace{q(t,\tilde{x})}_{\geq 0}=0.
\end{align*}
Thus, the solution remains monotonically increasing for all times.
For monotonically decreasing initial datum, we would require \(V_{1}''\geqq0\), and could then establish similarly that the solutions then remains decreasing. This assumption is somewhat in line with observations in \cite[Theorem 4.13 \& Theorem 4.18]{pflug4}, where exactly the same assumption can be found for the simpler case \(V_{2}\equiv \id\). Choosing \(V_{1}\equiv \id,\) we obtain on the other hand no restrictions on \(V_{2}\) to preserve monotonicity which is in line with \cite[Theorem 5.1]{friedrich2022conservation}.
\label{rem:monotonicity}
\end{remark}

\section{Numerical discretization}\label{sec:numericscheme}
To construct an approximate solution we adapt the approaches presented in \cite{friedrich2018godunov,friedrich2023numerical} to the setting in \cref{eq:NLmixed}.
In particular, we rely on a Godunov-type scheme.
Hence, we discretize space and time by an equidistant grid with the step sizes $\Dx\in\R_{>0}$ in space and $\Dt\in\R_{>0}$ in time, such that $t^n=n\Dt$ with $n\in\N$ describes the time mesh and $x_j=j\Dx,\ j\in\Z$ the cell centres of the space mesh with the cell interfaces $x_\jmh$ and $x_\jph$.
The finite volume approximation $q^{\Dx}$ is given by $q^{\Dx}(t,x)=q_j^n$ for $(t,x)\in[t^n,t^{n+1})\times \big[x_\jmh,x_\jph\big)$ and we approximate the initial data by
\begin{align}\label{eq:ininumeric}
    q_j^0=\tfrac{1}{\Dx} \int_{x_\jmh}^{x_\jph} q_0(x) dx,\quad j\in\Z.
\end{align}
Following \cite{friedrich2018godunov} the scheme is given by 
\begin{align}\label{eq:scheme}
    q_j^{n+1}=q_j^n-\lambda \left(q_j^nV_j^n-q_{j-1}^nV_{j-1}^n\right)\quad\text{with}\quad \lambda\coloneqq\tfrac{\Dt}{\Dx},
\end{align}
with the nonlocal term $V_j^n$.
To compute this term numerically we need to restrict the support of the kernel $\gamma$ on an interval $[0,\eta]$ with $\eta>0$.
For a spatial step size of $\Delta x$, $\eta$ must be chosen such that $\int_{\eta}^\infty \gamma\big(y\big)\dd y=\mathcal{O}(\Delta x)$ to maintain first order accuracy.
If the support of the kernel is already compact, we simply choose $\eta$ as the supremum of the support.
Then, the nonlocal term is approximated for $\Ne\coloneqq\lfloor \ndt/\Dx\rfloor$ by
\begin{equation}\label{eq:approximatedNLterm}
V_j^n\coloneqq V_1\left(\sum_{k=0}^{\Ne-1} \gamma_k V_2(q_{j+k+1}^n)\right)\quad\text{with}\quad 
    \gamma_k=\int_{k\Dx}^{(k+1)\Dx} \wt (x) dx.
\end{equation}
The weights $\gamma_k$ need to be computed exactly and the CFL condition is given by
\begin{align}\label{eq:CFL}
    \lambda\leq \frac{1}{\gamma_0 \norm{V_1'}_{L^\infty(V_2)}\norm{V_2'}_{L^\infty(\id)}\esssup_{x\in\R}q_{0}(x)+\norm{V_1}_{L^\infty(V_2)}},
\end{align}
where we use the notation 
\[\norm{\cdot}_{L^\infty(f)}\coloneqq\norm{\cdot}_{L^\infty((\essinf_{x\in\R}f(q_{0}(x)),\esssup_{x\in\R}f(q_{0}(x))))}\]
for simplicity.
We now prove that the numerical discretization fulfills the same maximum principle as the analytical solution.
\begin{theorem}[Discrete version of the maximum principle]
Given assumption \ref{ass:1} and the conditions in \cref{eq:sign_V_1_V_2}, for a given initial datum $q_j^0,\ j\in \Z$ as in \cref{eq:ininumeric} with $q_m=\min_{j\in\Z} q_j^0$ and $q_M=\max_{j\in\Z} q_j^0$, the scheme \eqref{eq:scheme}--\eqref{eq:approximatedNLterm} fulfills under the CFL condition \eqref{eq:CFL}
\[q_m\leq q_j^n\leq q_M,\ j\in\Z,\ n\in\N.\]
\end{theorem}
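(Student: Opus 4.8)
The plan is to argue by induction on the time level $n$, the base case $n=0$ being the admissibility $q_m\le q_j^0\le q_M$ that follows from the averaging \eqref{eq:ininumeric}. Assuming $q_m\le q_i^n\le q_M$ for every $i\in\Z$, I would show that one step of \eqref{eq:scheme}--\eqref{eq:approximatedNLterm} keeps each new value in $[q_m,q_M]$. To expose the mechanism I would first rewrite the update in the incremental form
\[
q_j^{n+1}=q_j^n-\lambda V_j^n\big(q_j^n-q_{j-1}^n\big)+\lambda q_{j-1}^n\big(V_{j-1}^n-V_j^n\big),
\]
which separates the upwind transport increment (the term $-\lambda V_j^n(q_j^n-q_{j-1}^n)$) from the variation of the nonlocal velocity (the term with $V_{j-1}^n-V_j^n$). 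Here the $V_j^n$ are nonnegative, as is natural for a traffic velocity and implicit in \eqref{eq:CFL}, and $V_{j-1}^n-V_j^n$ is the discrete analogue of $\partial_2\big(\gamma\ast V_2(q)\big)$ that drives the continuous computation in \eqref{eq:124}.

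For the upper bound I would reproduce the continuous maximum-principle argument at the discrete level. The CFL condition \eqref{eq:CFL} is precisely what controls the local contribution: the coefficient of $q_j^n$ obtained from \eqref{eq:scheme} is $1-\lambda V_j^n+\lambda q_{j-1}^n V_1'\gamma_0 V_2'$, and since $V_1'V_2'\le 0$ on the relevant density range by \eqref{eq:sign_V_1_V_2}, this equals $1-\lambda V_j^n-\lambda q_{j-1}^n\gamma_0|V_1'|\,|V_2'|$, which \eqref{eq:CFL} forces to be nonnegative — this is exactly the origin of the summand $\gamma_0\|V_1'\|\,\|V_2'\|\esssup_{x\in\R}q_0(x)$ in its denominator. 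It then remains to sign the nonlocal increment. Writing $S_j^n:=\sum_{k=0}^{N_\eta-1}\gamma_k V_2(q_{j+k+1}^n)$, so that $V_j^n=V_1(S_j^n)$, and applying summation by parts to $S_{j-1}^n-S_j^n=\sum_{k=0}^{N_\eta-1}\gamma_k\big(V_2(q_{j+k}^n)-V_2(q_{j+k+1}^n)\big)$, the monotonicity of the weights $\gamma_0\ge\gamma_1\ge\cdots\ge0$ together with the monotonicity of $V_2$ pins down the sign of $V_{j-1}^n-V_j^n$ at an index $j^{*}$ where the maximum $q_{j^{*}}^n=\max_i q_i^n$ is attained — exactly as the decreasing kernel forces $\partial_2(\gamma\ast V_2(q))\ge 0$ at the continuous maximum $\tilde x$ in \eqref{eq:partial_t_q_1}--\eqref{eq:124}. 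At such an index both corrections are nonpositive (the transport one because $q_{j^{*}}^n\ge q_{j^{*}-1}^n$), whence $q_{j^{*}}^{n+1}\le q_{j^{*}}^n\le q_M$.

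For the lower bound I would run the symmetric computation at an index where the minimum is attained: the summation-by-parts estimate and the sign of the velocity increment both reverse, giving $q_j^{n+1}\ge q_m$. Both alternatives in \eqref{eq:sign_V_1_V_2} are handled identically after interchanging the roles of the monotonicities of $V_1$ and $V_2$, just as in the two bullet points of the continuous proof.

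The hard part will be that, unlike a classical monotone finite-volume scheme, \eqref{eq:scheme}--\eqref{eq:approximatedNLterm} is \emph{not} monotone in all of its arguments once $V_2$ is genuinely nonlinear and the stencil is wide ($N_\eta\ge 2$): differentiating the flux shows that the coefficient of a downstream value $q_{j+l}^n$ is proportional to $q_j^n V_1'(S_j^n)\gamma_{l-1}-q_{j-1}^n V_1'(S_{j-1}^n)\gamma_l$, a difference whose sign is not fixed by the hypotheses even though $\gamma_{l-1}\ge\gamma_l$. One therefore cannot simply invoke the comparison principle for monotone schemes and write $q_j^{n+1}$ as a convex combination; the bound has to be extracted from the discrete maximum-point argument above, in which the decreasing-kernel summation by parts replaces the integration by parts of the continuous proof. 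Verifying that this localized estimate indeed propagates to \emph{every} cell, and not merely to the cell currently carrying the extremum, under the single CFL restriction \eqref{eq:CFL}, is the delicate technical point I expect to absorb most of the work.
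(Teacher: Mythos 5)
Your proposal has a genuine gap, and it is precisely the one you flag in your final paragraph: your core argument (a discrete maximum-point computation) only controls the single cell $j^{*}$ that currently attains the maximum, whereas the theorem asserts $q_j^{n+1}\le q_M$ for \emph{every} $j\in\Z$. Unlike the continuous-time argument, a time-discrete scheme can push a cell with $q_j^n<q_M$ strictly above $q_M$ in one step, so proving $q_{j^{*}}^{n+1}\le q_{j^{*}}^n$ at the argmax establishes nothing about the remaining cells; and since, as you correctly observe, the scheme is not monotone in its downstream arguments, no comparison principle is available to propagate the localized bound. The step you defer as ``the delicate technical point I expect to absorb most of the work'' is therefore not a technicality to be checked at the end --- it is the entire content of the proof, and your proposal does not supply it.

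The paper closes exactly this gap by making the summation-by-parts estimate \emph{quantitative and valid at every cell}, rather than merely sign-definite at the argmax: by the mean value theorem applied to $V_1$, Abel summation of the difference of the two nonlocal sums, the kernel monotonicity $\gamma_k\le\gamma_{k-1}$, and the sign conditions \eqref{eq:sign_V_1_V_2}, one obtains for \emph{all} $j\in\Z$
\begin{align*}
V_{j-1}^{n}-V_{j}^{n}\;\le\; \norm{V_1'}_{L^\infty(V_2)}\norm{V_2'}_{L^\infty(\id)}\,\gamma_0\,\big(q_M-q_j^n\big),
\end{align*}
i.e.\ the possible upward push coming from the variation of the nonlocal velocity is proportional to the remaining gap $q_M-q_j^n$. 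Inserting this into your own incremental form of the scheme gives
\begin{align*}
q_M-q_j^{n+1}\;\ge\;\big(q_M-q_j^n\big)\Big(1-\lambda V_j^n-\lambda q_{j-1}^n\,\gamma_0\norm{V_1'}_{L^\infty(V_2)}\norm{V_2'}_{L^\infty(\id)}\Big)+\lambda V_j^n\big(q_M-q_{j-1}^n\big),
\end{align*}
and the CFL condition \eqref{eq:CFL} makes the first coefficient nonnegative while the induction hypothesis (together with $q_j^n\ge q_m\ge 0$ and $V_j^n\ge 0$) makes every factor nonnegative, so $q_j^{n+1}\le q_M$ holds cell by cell; the lower bound is symmetric. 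Your ingredients --- the incremental splitting, the Abel summation with the decreasing kernel, and the identification of the term $\gamma_0\norm{V_1'}_{L^\infty(V_2)}\norm{V_2'}_{L^\infty(\id)}\esssup_{x\in\R}q_{0}(x)$ in the CFL denominator --- are all correct and are indeed the ones the paper uses, but assembled around a pointwise extremum they do not yield the theorem: the missing idea is that the velocity-difference bound must be proved with the gap $q_M-q_j^n$ on its right-hand side, uniformly in $j$, which is what lets the CFL condition absorb the nonlocal correction in every cell simultaneously.
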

\begin{proof}
Following closely the proof of \cite[Theorem 3.1]{friedrich2018godunov} it turns out that we only need to consider the difference between two nonlocal velocities.
If they satisfy
\begin{align*}
     V_{\jmo}^{n}-V_{j}^{n}\leq \norm{V_1'}_{L^\infty(V_2)}\norm{V_2'}_{L^\infty(\id)} \gamma_0 (\rho_M-\rho_j^n),
\end{align*}
then we obtain $q_j^{n+1}\leq q_{M}$ by following the corresponding steps in \cite[Theorem 3.1]{friedrich2018godunov}, using the CFL condition \eqref{eq:CFL} and the scheme \eqref{eq:scheme}.
For $j\in\Z$ and a fixed $n\in\N$ with $n>0$ we obtain 
\begin{align*}
    V_{\jmo}^{n}-V_{j}^{n}&= 
    V_1\left(\sum_{k=0}^{\Ne-1} \gamma_k V_2(q^n_{j+k})\right)-V_1\left(\sum_{k=0}^{\Ne-1} \gamma_k V_2(q^n_{j+k+1})\right)\\
    &=V_1'(\xi_j)\left(\sum_{k=1}^{\Ne-1}(\gamma_k-\gamma_{k-1})V_2(q_{j+k}^n)-\gamma_{{\Ne-1}} V_2(q_{j+\Ne}^n)+\gamma_0 V_2(q_j^n)\right)\\
    \intertext{with $\xi_j\in\R$ appropriatly chosen (it exists thanks to the mean value theorem),}
    &\leq V_1'(\xi_j)\left(\sum_{k=1}^{\Ne-1}(\gamma_k-\gamma_{k-1})V_2(q_M)-\gamma_{{\Ne-1}}V_2(q_{M})+\gamma_0 V_2(q_j^n)\right)\\
    \intertext{which holds since $\gamma_k\leq \gamma_{k-1}$ for $k=1,\dots,\Ne-1$ and because of the signs of $V_1'$ and $V_2'$ in \eqref{eq:sign_V_1_V_2},}
    &=V_1'(\xi_j)\gamma_{0}\left(V_2(q_j^n)-V_2(q_{M})\right)\leq \norm{V_1'}_{L^\infty(V_2)}\norm{V_2'}_{L^\infty(\id)} \gamma_0 (q_M-q_j^n),
\end{align*}
where we use again the signs of $V_1'$ and $V_2'$ given by \cref{eq:sign_V_1_V_2}. 
Analogously, we can prove
\begin{align*}
     V_{\jmo}^{n}-V_{j}^{n}\geq \norm{V_1'}_{L^\infty(V_2)}\norm{V_2'}_{L^\infty(\id)} \gamma_0 (q_m-q_j^n)
\end{align*}
which can be used to prove the lower bound.
\end{proof}
We note that following similar steps as in \cite{friedrich2018godunov}, it is possible to derive bounded variation estimates on the approximate solution such that the convergence of the scheme against a weak solution can be obtained.
We do not go into details here.

\section{Numerical examples}\label{sec:numericexample}
Let us present two examples which can be described by \cref{eq:NLmixed} and give a  suitable interpretation from a traffic modelling point of view.
During this section we set $\gamma(x)=2\frac{\ndt-x}{\ndt^2}\chi_{[0,\ndt]}$ with $\ndt=0.5$ and
the initial data to \(q_{0}\equiv\tfrac{1}{4}+\tfrac{1}{2}\chi_{[-0.5,0.5]}\). We are interested in the approximate solution at the time $t=0.5$ for a spatial discretization given by $\Dx=10^{-3}$ and a time grid size of $\Dt=\Dx/(3\gamma_0+1)$. We note that the CFL condition is slightly stricter than the one given by \eqref{eq:CFL}, but it allows to choose the same CFL condition in all simulations.\\
In the first example, drivers might not be able to perceive the true density on a road.
They only estimate the observed density ahead of them and base their velocity on this estimation.
Hence, $V_2$ expresses the estimated density in dependence of the true density.
A possible choice is $V_2(q)=q+\varepsilon q(1-q)$ for $\varepsilon\in[-1,1]\setminus \{0\}$, i.e. underestimation of the density for $\varepsilon<0$ and overestimation for $\varepsilon>0$.
Further, if $q_0\in[0,1]$, it follows $V_2'\geq 0$. Here,
$V_1$ is the velocity function to determine the velocity out of the estimated density with $V_1'\leq 0$, e.g.\ $V_1(q)=1-q^2$.
\Cref{fig:estimation} shows the approximate solutions for different values of $\varepsilon$.
In particular, the case $\varepsilon=0$ is the nonlocal in density model, in which the drivers have perfect knowledge of the density.
It can be seen that an underestimation of the velocity results in a higher density while for overestimation the density is lower.
Further, in the case of underestimation the density is located further downstream.
\begin{figure}
    \centering
    \setlength{\fwidth}{0.8\textwidth}
   \input{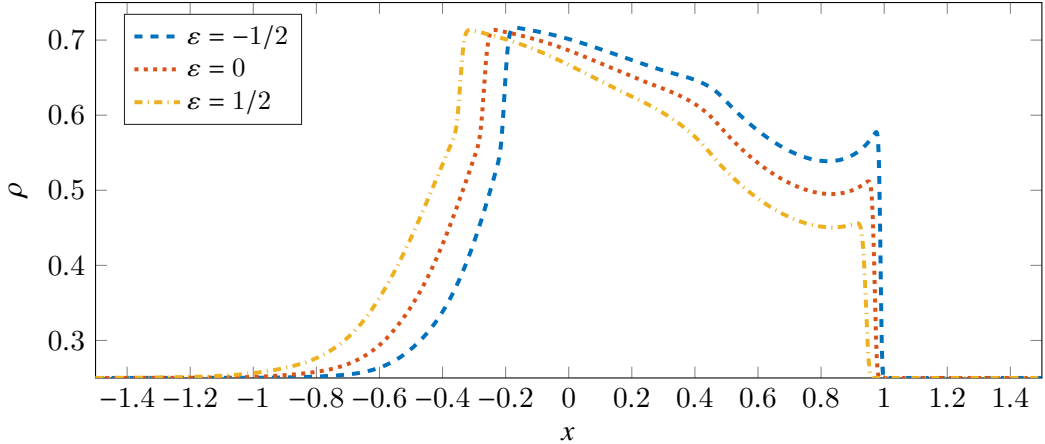}
    \caption{Approximate solution of \eqref{eq:NLmixed} at $t=0.5$ with  $V_2(q)=q+\varepsilon q(1-q)$ and $V_1(q)=1-q^2$.}
    \label{fig:estimation}
\end{figure}

Even more interesting is the case that drivers consider a mixture of relative velocity and relative density for their movement.
Hence, for a suitable velocity function $v$ (meaning $v'\leq 0$) to estimate the velocity and $\alpha\in[0,1]$, which expresses the preference to adapt more according to the density or the velocity, a quantity of interest might be
\begin{align}\label{eq:preferences}
    V_2(q)=\alpha \tfrac{q}{q_{\max}}+(1-\alpha)\left(1-\tfrac{v(q)}{v_{\max}}\right).
\end{align}
Again $V_1$ transforms this quantity into a velocity.
\Cref{fig:motivation} displays the approximate solutions for different values of $\alpha$ and $V_2(q)$ as in \eqref{eq:preferences} with $v(q)=1-q^2$, $q_{\max}=v_{\max}=1$ and $V_1(q)=(1-q)^2$.
Interestingly, we can see that if more preference is given to the velocity, the front end of the traffic jam moves faster, the peak of the traffic jam decreases, but at the back end of the traffic jam the density is higher.
\begin{figure}
    \centering
    \setlength{\fwidth}{0.8\textwidth}
   \input{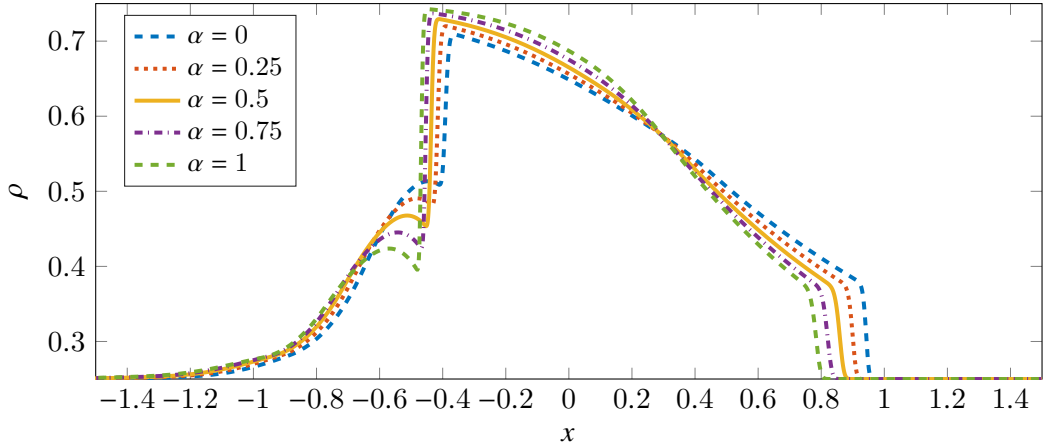}
    \caption{Approximate solution of \eqref{eq:NLmixed} at $t=0.5$ with $V_2$ as in \eqref{eq:preferences} and $V_1(q)=(1-q)^2$.}
    \label{fig:motivation}
\end{figure}
\begin{remark}
    We note that the monotonicity property in \cref{rem:monotonicity} can be seen in \cref{fig:estimation} and \cref{fig:motivation}.
    In particular, in the first example $V_1''<0$ holds and hence the monotonicity is only kept for the increasing part, see \cref{fig:estimation}, and in the second example due to $V_1''>0$ the montonicity is kept on the decreasing part.
\end{remark}

\section{Conclusion}
We have presented results on the well-posedness of a nonlocal conservation law incorporating both approaches of space averaging, i.e., mean density and mean velocity. Numerical examples demonstrate the performance of the model.\\
Future work may include studying the behavior of the model when the kernel function tends to the Dirac delta. This question has been intensively studied in the literature for the case $V_2\equiv \id$ and $V_1\equiv \id$, see e.g. \cite{teixeira,bressan2021entropy, colombo2023nonlocal,friedrich2022conservation,pflug4} and the references therein, but so far not for the general case of eq. \eqref{eq:NLmixed}.

\section*{Acknowledgement}
Lukas Pflug thanks for the support by the Collaborative Research Centre 1411 “Design of Particulate Products” (Project-ID 416229255).
Jan Friedrich was supported by the German Research Foundation (DFG) under grant HE 5386/18-1, 19-2, 22-1, 23-1 and
Simone G\"ottlich under grant GO 1920/10-1.

\bibliographystyle{plain}
\bibliography{references}

\end{document}